\newtheorem{thm}{Theorem}[section]
\newtheorem{lemma}[thm]{Lemma}
\newtheorem{cor}[thm]{Corollary}
\newtheorem{defn}[thm]{Definition}
\theoremstyle{definition}
\newtheorem{example}[thm]{Example}
\newcommand{\R}{\ensuremath{\mathbbm{R}}}
\newcommand{\N}{\ensuremath{\mathbbm{N}}}
\newcommand{\Q}{\ensuremath{\mathbbm{Q}}}
\newcommand{\Z}{\ensuremath{\mathbbm{Z}}}
\newcommand{\ts}{\hspace{0.5pt}}
\newcommand{\ii}{\ts\mathrm{i}\ts}
\newcommand{\Lam}{\varLambda}
\newcommand{\Sig}{\varSigma}
\newcommand{\CS}{\mathcal{S}}
\DeclareMathOperator{\OC}{OC}
\DeclareMathOperator{\OS}{OS}
\DeclareMathOperator{\OG}{O}
\DeclareMathOperator{\Scal}{Scal}
\DeclareMathOperator{\scal}{scal}
\DeclareMathOperator{\lcm}{lcm}
\DeclareMathOperator{\den}{den}
\begin{document}

\title{Similar submodules and coincidence site modules}

\author{P. Zeiner}
\address{Fakult\"at f\"ur Mathematik, Universit\"at Bielefeld, 33615 Bielefeld, Germany}
\email{pzeiner@math.uni-bielefeld.de}

\begin{abstract} 
We consider connections between similar sublattices and coincidence site
lattices (CSLs),
and more generally between similar submodules and coincidence
site modules of general (free) $\Z$-modules in $\R^d$.

In particular, we generalise results obtained by S.~Glied and
M.~Baake~\cite{sglied1,sglied2} on
similarity and coincidence isometries of lattices and certain
lattice-like modules called $\CS$-modules.
An important result is that the factor group $\OS(M)/\OC(M)$ is Abelian
for arbitrary $\Z$-modules $M$,
where $\OS(M)$ and $\OC(M)$ are the groups of similar and coincidence
isometries, respectively. In addition, we derive various relations
between the indices of CSLs and their corresponding
similar sublattices.
\end{abstract}

\maketitle

\begin{multicols}{2}

\section{Introduction}

Coincidence site lattices (CSLs) are an important tool in describing grain
boundaries in crystals; see~\cite{frie11,KW, rang66, boll70, gribo74}
and references therein. These concepts have been generalised for modules to
analyse grain boundaries in quasicrystals~\cite{plea96,baa97,war93,warlue94}.
On the other hand, similar sublattices and submodules have been
studied~\cite{consloa99,baagri04,baaheu1}, and it soon turned that there
must be close connections between these two types of sublattices,
compare for instance~\cite{baaheu1} and~\cite{pzcsl4} for similar sublattices
and CSLs of the $A_4$-lattice. In 2008, S.~Glied and M.~Baake established
a connection between similar sublattices and CSLs by showing
that the group of coincidence isometries is a normal subgroup
of the group of similarity isometries~\cite{sglied1}, a result which was later
generalised to a certain class of modules~\cite{sglied2}, which the author
called $\CS$-modules. 

In this paper, we want to have a closer look at these connections.
In the first part, we elaborate in more detail on the connections
between similar sublattices and CSLs by proving some relation between
the coincidence index and the so-called denominator of a coincidence
isometry. In the second part, we present a generalisation of the results
by S.~Glied and M.~Baake to general $\Z$-modules.

Let us fix some notations and recall the most important notions first,
for more details we refer to~\cite{baa97,sglied1}.
Throughout this paper, $\Lam\subset \R^d$ denotes a lattice of full
rank in $\R^d$. An isometry $R\in\OG(d,\R)$, i.e. an orthogonal
transformation in $\R^d$, is called a 
\emph{coincidence isometry} of $\Lam$ if the intersection
$\Lam \cap R\Lam$ is a sublattice of $\Lam$ of full rank. This happens
if and only if the index $\Sig^{}_\Lam(R):=[\Lam: \Lam \cap R\Lam]$,
the so-called \emph{coincidence index}, is finite. In this case,
we call $\Lam(R):=\Lam \cap R\Lam$ a \emph{coincidence site lattice} (CSL).
The set of all coincidence isometries forms a group, which we denote
by $\OC(\Lam)$.

Two lattices $\Lam_1$ and $\Lam_2$ are called \emph{commensurate},
denoted by $\Lam_1 \sim \Lam_2$,
if $\Lam_1 \cap \Lam_2$ is a sublattice of full rank
of both $\Lam_1$ and $\Lam_2$.
As we assume throughout this paper that any lattice has full rank,
$\Lam_1$ and $\Lam_2$ are commensurate if and only if $\Lam_1 \cap \Lam_2$
is a sublattice of at least one of $\Lam_1$ and $\Lam_2$. Equivalently,
$\Lam_1$ and $\Lam_2$ are commensurate if and only if there exists an integer
$m$ such that $m \Lam_1 \subset \Lam_2$. Thus, $R$ is a coincidence isometry
of $\Lam$ if and only if $\Lam$ and $R\Lam$ are commensurate.

A \emph{similar sublattice} (SSL) is a sublattice of $\Lam$ that is similar
to~$\Lam$, i.e. it is a sublattice of the form $\alpha R \Lam \subset \Lam$
for some $R\in\OG(d,\R)$ and $\alpha\in\R^+$. We call $R$ a similarity
isometry of $\Lam$, if there exists an $\alpha\in\R^+$ such that
$\alpha R \Lam$ is a similar sublattice. The set of similarity isometries
forms a group as well, which we denote by $\OS(\Lam)$. For any
$R\in\OS(\Lam)$ we define the \emph{denominator} $\den^{}_\Lam(R)$
as the smallest
scaling factor $\alpha\in\R^+$ such that $\alpha R \Lam \subset \Lam$.
Recall that $\alpha^d$, and thus $\den^{}_\Lam(R)^d$, is an integer.

For any $R\in\OG(d,\R)$ we can define the two sets
\begin{align}
  \Scal^{}_\Lam(R)&:=\{\alpha\in\R \mid \alpha R \Lam\subseteq \Lam \}, \\
  \scal^{}_\Lam(R)&:=\{\alpha\in\R \mid \alpha R \Lam\sim \Lam \}, 
\end{align}
where the first one consists of all scaling factors giving rise
to a similar sublattice and the latter is the set of all scaling factors
which lead to lattices commensurate to~$\Lam$.
Observe that $\Scal^{}_\Lam(R)$ and $\scal^{}_\Lam(R)$ are non-trivial
if and only if $R\in \OS(\Lam)$. In other words,
$\Scal^{}_\Lam(R)\ne \{0\}$, and likewise $\scal^{}_\Lam(R)\ne \{0\}$,
if and only if $R\in \OS(\Lam)$, compare~\cite{sglied1,sglied2}.
In particular, if $E$ is the identity operation, then
$\Scal^{}_\Lam(E)=\Z$ and $\scal^{}_\Lam(E)=\Q$. More generally,
\begin{align}
\Scal^{}_\Lam(R)=\den^{}_\Lam(R)\,\Z  \label{eq:Scal-lat}\\
\scal^{}_\Lam(R)=\den^{}_\Lam(R)\,\Q. \label{eq:scal-lat}
\end{align}

\section{Similar sublattices and CSLs}

As mentioned above, there is a close connection between the groups
$\OC(\Lam)$ and $\OS(\Lam)$. In particular, S.~Glied and M.~Baake
have shown the following~\cite{sglied1} result.
\begin{thm}\label{thm:OCOS-lat}
  The kernel of the homomorphism 
  \begin{align}
    \phi: \OS(\Lam) & \to \R^+/\Q^+,\nonumber \\
    R & \mapsto \scal^{}_\Lam(R) \cap \R^+ \nonumber
  \end{align}
  is the group $\OC(\Lam)$. Thus $\OC(\Lam)$ is a normal subgroup of
  $\OS(\Lam)$ and $\OS(\Lam)/\OC(\Lam)$ is Abelian. 
  
  Moreover, all elements of $\OS(\Lam)/\OC(\Lam)$ have finite order, 
  in particular, their order is a divisor of the dimension~$d$.
\end{thm}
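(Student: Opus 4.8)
The plan is to check that $\phi$ is a well-defined group homomorphism, to identify its kernel, and then to read off the structural statements. By~\eqref{eq:scal-lat} one has $\scal_\Lam(R)\cap\R^+ = \den_\Lam(R)\,\Q^+$ for every $R\in\OS(\Lam)$, so $\phi(R)$ is simply the coset $\den_\Lam(R)\,\Q^+\in\R^+/\Q^+$; in particular $\phi$ is well defined on all of $\OS(\Lam)$.

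The crucial step is the homomorphism property, and here I would work with the sublattice sets $\Scal_\Lam$ rather than the commensurability sets $\scal_\Lam$, since inclusions are easier to track. If $\alpha R\Lam\subseteq\Lam$ and $\beta S\Lam\subseteq\Lam$, then applying the linear map $\alpha R$ to the second inclusion gives
\[
  \alpha\beta\,RS\Lam = \alpha R(\beta S\Lam)\subseteq \alpha R\Lam\subseteq\Lam,
\]
so $\alpha\beta\in\Scal_\Lam(RS)$ (which in particular confirms $RS\in\OS(\Lam)$). Choosing $\alpha=\den_\Lam(R)$ and $\beta=\den_\Lam(S)$ and invoking~\eqref{eq:Scal-lat}, the product $\den_\Lam(R)\den_\Lam(S)$ is a positive integer multiple of $\den_\Lam(RS)$. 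Hence the two cosets $\den_\Lam(RS)\,\Q^+$ and $\den_\Lam(R)\den_\Lam(S)\,\Q^+$ coincide, which is exactly $\phi(RS)=\phi(R)\phi(S)$.

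With $\phi$ in hand, the kernel is immediate: $R\in\ker\phi$ means $\den_\Lam(R)\in\Q^+$, i.e.\ $1\in\scal_\Lam(R)$ by~\eqref{eq:scal-lat}, which by the definition of $\scal_\Lam$ says $\Lam\sim R\Lam$, and this is precisely the condition $R\in\OC(\Lam)$. Thus $\ker\phi=\OC(\Lam)$, so $\OC(\Lam)$ is normal in $\OS(\Lam)$, and the induced embedding $\OS(\Lam)/\OC(\Lam)\hookrightarrow\R^+/\Q^+$ realises the quotient as a subgroup of the Abelian group $\R^+/\Q^+$; in particular it is Abelian.

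Finally, for the order statement I would use the integrality recalled before the theorem. Since $\R^+/\Q^+$ is written multiplicatively, $\phi(R)^d=\den_\Lam(R)^d\,\Q^+$, and because $\den_\Lam(R)^d$ is an integer this coset is trivial; hence every element of $\mathrm{im}\,\phi\cong\OS(\Lam)/\OC(\Lam)$ has order dividing~$d$. The only genuinely substantive point is the homomorphism property: once the inclusion argument yields the multiplicativity of the denominator up to a rational factor, everything else is formal, and the divisor-of-$d$ claim drops out of the fact that $\den_\Lam(R)^d\in\Z$.
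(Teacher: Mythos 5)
Your proof is correct. Note that the paper does not actually supply its own proof of Theorem~\ref{thm:OCOS-lat}: it is quoted as a result of Glied and Baake~\cite{sglied1}. Your argument is the standard one and is fully supported by facts the paper does state: well-definedness and the identification of the kernel follow from Eq.~\eqref{eq:scal-lat} together with the observation that $1\in\scal^{}_\Lam(R)$ is exactly the condition $\Lam\sim R\Lam$; the multiplicativity step you isolate (that $\den(R)\den(S)$ is a positive integer multiple of $\den(RS)$) is precisely the content and the proof of Lemma~\ref{lem:den-mult-lat}; and the order-divides-$d$ claim drops out of $\den^{}_\Lam(R)^d\in\Z$, which is recalled in the introduction. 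So the proposal is a complete and correct reconstruction of the intended argument.
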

Hence, any coincidence isometry is a similarity isometry, and thus
it makes sense to compare $\Sig(R)$ and $\den(R)$ for any $R\in\OC(\Lam)$.
By definition, $\Sig(R)$ is a positive integer, and so is
$\den(R)$ for any $R\in\OC(\Lam)$. This can be seen as follows: by
Theorem~\ref{thm:OCOS-lat}, we see $\scal^{}_\Lam(R)=\Q$, which reflects
the fact that $\Lam$ and $R\Lam$ are commensurate. Thus
$\den(R)\in\Scal^{}_\Lam(R)\subset\scal^{}_\Lam(R)=\Q$, and as $\den(R)^d\in\Z$,
we see $\den(R)\in\N$. 

Recall that $\Sig(R^{-1})=\Sig(R)$ for any $R\in\OC$, compare~\cite{baa97},
whereas $\den(R^{-1})$ and $\den(R)$ are not equal in
general~\cite{sglied1,pzinprep}. Nevertheless $\den(R^{-1})$ and $\den(R)$
are not independent of each other, as we will show in a moment. But first we
mention
\begin{lemma}\label{lem:den-mult-lat}
  Let $\Lam$ be a lattice in $R^d$. Then, for any $R,S\in\OS(\Lam)$,
  \begin{align}  \label{eq:den-mult-lat}
    \frac{\den(R)\den(S)}{\den(RS)}\in\N.
  \end{align}
\end{lemma}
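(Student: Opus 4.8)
The plan is to translate the divisibility claim into a membership statement about the set $\Scal^{}_\Lam$. By~\eqref{eq:Scal-lat} we have $\Scal^{}_\Lam(RS)=\den(RS)\,\Z$, so the quotient $\den(R)\den(S)/\den(RS)$ is a (necessarily positive) integer precisely when $\den(R)\den(S)\in\Scal^{}_\Lam(RS)$. Hence it suffices to verify the single inclusion
\begin{align}
  \den(R)\den(S)\,RS\,\Lam\subseteq\Lam. \nonumber
\end{align}

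The main step is a short composition argument. By the definition of the denominator, both $\den(S)\,S\Lam\subseteq\Lam$ and $\den(R)\,R\Lam\subseteq\Lam$ hold. Since scalars commute with the linear maps $R$ and $S$, I would write $\den(R)\den(S)\,RS=(\den(R)R)(\den(S)S)$ and then apply the linear map $\den(R)R$ to the inclusion $\den(S)S\Lam\subseteq\Lam$, using that a linear map preserves inclusions. This yields
\begin{align}
  \den(R)\den(S)\,RS\,\Lam=(\den(R)R)\bigl(\den(S)S\Lam\bigr)\subseteq(\den(R)R)\Lam=\den(R)\,R\Lam\subseteq\Lam, \nonumber
\end{align}
which is exactly what we need.

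Finally, since $\den(R),\den(S)>0$ and $\den(RS)>0$, the number $\den(R)\den(S)$ is a positive element of $\den(RS)\,\Z$, so it equals a positive integer multiple of $\den(RS)$; dividing, $\den(R)\den(S)/\den(RS)\in\N$. I do not expect a genuine obstacle here, as the argument is purely a chaining of two inclusions. The only point deserving care is the identification $\Scal^{}_\Lam(RS)=\den(RS)\,\Z$ from~\eqref{eq:Scal-lat}, which encodes that $\den(RS)$ is the \emph{smallest} admissible positive scaling factor and thus guarantees that every element of $\Scal^{}_\Lam(RS)$ is an integer multiple of it; it is precisely this minimality that converts the set-theoretic inclusion into the desired integrality.
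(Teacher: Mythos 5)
Your proof is correct and follows essentially the same route as the paper: chaining the inclusions $\den(R)\den(S)\,RS\,\Lam=(\den(R)R)(\den(S)S\Lam)\subseteq\den(R)R\Lam\subseteq\Lam$ and then invoking $\Scal^{}_\Lam(RS)=\den(RS)\,\Z$ together with positivity. No discrepancies to report.
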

\begin{proof}
  By the definition of the denominator,
  \begin{align*}
    \den(R)\den(S) RS \Lam
    & =  \den(R) R \left( \den(S) S \Lam \right)\\
    & \subseteq \den(R) R \Lam \subseteq \Lam
  \end{align*}
  which is only possible if $\den(R)\den(S)\in\Scal(RS)=\den(RS)\Z$.
  As the denominator
  is positive by definition, Eq.~\eqref{eq:den-mult-lat} follows.
\end{proof}
Now we are able to prove the following relations between
$\den(R^{-1})$ and $\den(R)$.
\begin{lemma}\label{lem:den-lat}
  Let $\Lam$ be a lattice in $\R^d$. Then, for any $R\in\OS(\Lam)$,
  \begin{align}
    \den(R)\den(R^{-1}) & \in \N \label{eq:den-lat1}\\
    \frac{\den(R)^{d-1}}{\den(R^{-1})} & \in \N \label{eq:den-lat2}
  \end{align}
\end{lemma}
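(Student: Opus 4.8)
The first relation~\eqref{eq:den-lat1} is essentially a special case of Lemma~\ref{lem:den-mult-lat}. The plan is to apply that lemma with $S=R^{-1}$ and to use $\den(E)=1$, which holds because $\Scal_\Lam(E)=\Z$ forces the smallest positive scaling factor to be $1$. This yields $\den(R)\den(R^{-1})/\den(RR^{-1})=\den(R)\den(R^{-1})\in\N$ at once.

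For~\eqref{eq:den-lat2} I would pass to matrices. Fix a basis of $\Lam$, identifying $\Lam\cong\Z^d$, and let $A$ and $B$ be the matrices of $aR$ and $bR^{-1}$ in this basis, where $a:=\den(R)$ and $b:=\den(R^{-1})$. Since $aR\Lam\subseteq\Lam$ and $bR^{-1}\Lam\subseteq\Lam$, both $A$ and $B$ lie in $M_d(\Z)$, and $\det A=\pm a^d$ because $\det R=\pm1$. The composition $aR\cdot bR^{-1}=ab\,\mathrm{id}$ gives the matrix identity
\[
  AB = ab\,I = m\,I, \qquad m:=ab\in\N,
\]
the integrality of $m$ being~\eqref{eq:den-lat1}. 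The exponent $d-1$ in~\eqref{eq:den-lat2} suggests the adjugate: from $A^{-1}=B/m$ and $\operatorname{adj}(A)=(\det A)\,A^{-1}$ (note $\det A\ne0$) I obtain
\[
  \operatorname{adj}(A) = \frac{\det A}{m}\,B = \pm\,\frac{a^{d-1}}{b}\,B .
\]
Here $\operatorname{adj}(A)\in M_d(\Z)$, since its entries are $(d-1)$-minors of the integer matrix $A$, and $t:=a^{d-1}/b=a^d/(ab)$ is a positive rational.

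It remains to show that $t\in\N$, and this is where the minimality built into the denominator enters. I claim that $B$ is primitive, i.e. the gcd of its entries equals $1$. Indeed, if some integer $c>1$ divided every entry of $B$, then $\tfrac1c B$ would be an integer matrix representing $(b/c)R^{-1}$, so that $(b/c)R^{-1}\Lam\subseteq\Lam$ with $b/c<b$, contradicting the minimality of $b=\den(R^{-1})$. Given primitivity, choose integers $c_{ij}$ with $\sum_{i,j}c_{ij}B_{ij}=1$; since $tB_{ij}=\pm\operatorname{adj}(A)_{ij}\in\Z$ for all $i,j$, it follows that $t=\sum_{i,j}c_{ij}(tB_{ij})\in\Z$, and $t>0$ gives $t\in\N$.

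I expect the main obstacle to be isolating the right integrality pair: one needs \emph{both} that $\operatorname{adj}(A)$ is integral (integrality of the SSL $aR\Lam$) \emph{and} that $B$ is primitive (minimality of $\den(R^{-1})$), since the purely arithmetic constraints $a^d,b^d,ab\in\N$ alone do not force $a^{d-1}/b\in\N$. Recognising that $d-1$ is the adjugate exponent, and that primitivity of $B$ is exactly the matrix translation of $b$ being the smallest admissible scaling, is the key that unlocks the argument.
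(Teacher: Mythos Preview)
Your proof is correct, and at its core it is the same argument as the paper's, just in matrix rather than lattice language. For~\eqref{eq:den-lat1} the two proofs are identical. For~\eqref{eq:den-lat2} the paper stays coordinate-free: since $\den(R)R\Lam$ has index $\den(R)^d$ in $\Lam$, one has $\den(R)^d\Lam\subseteq\den(R)R\Lam$, equivalently $\den(R)^{d-1}R^{-1}\Lam\subseteq\Lam$, and then Eq.~\eqref{eq:Scal-lat} in the form $\Scal_\Lam(R^{-1})=\den(R^{-1})\Z$ finishes at once. Your adjugate identity $\operatorname{adj}(A)=\pm(a^{d-1}/b)\,B$ is precisely the matrix incarnation of that inclusion, and your primitivity-of-$B$ step is a from-scratch re-derivation of $\Scal_\Lam(R^{-1})=b\,\Z$. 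The paper's route is shorter because it quotes the structural description of $\Scal_\Lam$ already set up in the introduction; your route has the pedagogical virtue of making explicit exactly where the minimality of $\den(R^{-1})$ enters.
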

\begin{proof}
  Eq.~\eqref{eq:den-lat1} is just a special case of
  Lemma~\ref{lem:den-mult-lat}, with $S=R^{-1}$ and $\den(E)=1$.

  For the second claim, observe that $\den(R)R\Lambda$ has index
  $[\Lam:\den(R)R\Lambda]=\den(R)^d$ in $\Lam$. Thus,
  $\den(R)^d\Lam \subseteq \den(R)R\Lambda$, or equivalently,
  $\den(R)^{d-1} R^{-1} \Lam \subseteq \Lambda$. This means
  $\den(R)^{d-1}\in \Scal^{}_\Lam(R)$, and now Eq.~\eqref{eq:Scal-lat} implies
  Eq.~\eqref{eq:den-lat1}.
\end{proof}
For $d=2$ this result simplifies considerably.
\begin{cor}
 $\den^{}_\Lam(R^{-1})=\den^{}_\Lam(R)$ for any planar lattice $\Lam$ and any
 $R\in\OS(\Lam)$.
\end{cor}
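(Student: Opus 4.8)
The plan is to specialise the two relations from Lemma~\ref{lem:den-lat} to the case $d=2$ and observe that together they force equality. For a planar lattice, Eq.~\eqref{eq:den-lat2} reads $\den(R)^{d-1}/\den(R^{-1})=\den(R)/\den(R^{-1})\in\N$, so $\den(R^{-1})$ divides $\den(R)$. By the same reasoning applied to $R^{-1}$ in place of $R$ (noting $(R^{-1})^{-1}=R$), I would obtain that $\den(R)$ divides $\den(R^{-1})$. Two positive integers each dividing the other must be equal, which yields the claim.

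The key steps, in order, are: first, recall that $\den(R)$ and $\den(R^{-1})$ are positive integers whenever the relevant quantities are integral. Here a small caveat arises: a priori $\den^{}_\Lam(R)$ need only satisfy $\den^{}_\Lam(R)^d\in\Z$, not $\den^{}_\Lam(R)\in\Z$, so I must make sure the divisibility statements are genuinely statements about integers. However, Eq.~\eqref{eq:den-lat2} with $d=2$ tells me that $\den(R)/\den(R^{-1})$ is a (positive) natural number, and symmetrically $\den(R^{-1})/\den(R)$ is a natural number; the product of these two ratios is $1$, so each ratio is a positive integer whose reciprocal is also a positive integer. The second step is then purely arithmetic: if $q$ and $1/q$ are both in $\N$, then $q=1$, giving $\den(R)=\den(R^{-1})$.

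I expect the only real subtlety to be the symmetry step, namely justifying that Eq.~\eqref{eq:den-lat2} may be applied with $R$ replaced by $R^{-1}$. This is legitimate because $R\in\OS(\Lam)$ implies $R^{-1}\in\OS(\Lam)$ (as $\OS(\Lam)$ is a group), and Lemma~\ref{lem:den-lat} holds for every element of $\OS(\Lam)$; applying it to $R^{-1}$ simply swaps the roles of $\den(R)$ and $\den(R^{-1})$. Once this is in hand, no computation remains beyond the trivial observation about mutually dividing integers, so there is essentially no hard part in dimension two — the content lives entirely in the preceding lemma, and the corollary is the clean collapse of its two inequalities when the exponent $d-1$ equals $1$.
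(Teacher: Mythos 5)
Your proof is correct and follows exactly the route the paper intends (the corollary is stated without explicit proof, immediately after Lemma~\ref{lem:den-lat}): specialise Eq.~\eqref{eq:den-lat2} to $d=2$, apply it also to $R^{-1}$, and conclude from the two reciprocal ratios both lying in $\N$ that they equal $1$. Your care about $\den(R)$ not being an integer a priori is a nice touch, and your resolution of it is sound.
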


We are now able to prove the following bounds for $\Sig(R)$ in terms
of the denominators $\den(R)$ and $\den(R^{-1})$.
\begin{thm}\label{theo:den-Sig1}
  Let $\Lam$ be a lattice in $\R^d$. Then, for any $R\in\OC(\Lam)$,
  \begin{enumerate}
  \item \label{enu:den-Sig1}
    $\lcm\left(\den^{}_\Lam(R),\den^{}_\Lam(R^{-1})\right)$ divides 
    $\Sig^{}_\Lam(R)$,
   \item \label{enu:den-Sig2}
    $\Sig^{}_\Lam(R)$ divides 
    $\gcd\left(\den^{}_\Lam(R),\den^{}_\Lam(R^{-1})\right)^d$,
  \item \label{enu:den-Sig3}
    $\Sig^{}_\Lam(R)^2$ divides 
    $\lcm\left(\den^{}_\Lam(R),\den^{}_\Lam(R^{-1})\right)^d$.
  \end{enumerate}
\end{thm}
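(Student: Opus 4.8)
The plan is to translate each divisibility into an inclusion of lattices and then read it off from the multiplicativity of the index in a tower. Throughout I write $\Lam(R)=\Lam\cap R\Lam$ for the CSL and abbreviate $d_R=\den^{}_\Lam(R)$, $d_{R^{-1}}=\den^{}_\Lam(R^{-1})$ and $\Sig=\Sig^{}_\Lam(R)$, all positive integers because $R\in\OC(\Lam)$. Two facts are used repeatedly. First, since $\Sig^{}_\Lam(R^{-1})=\Sig^{}_\Lam(R)$ and $\Lam(R^{-1})=R^{-1}\Lam(R)$, applying $R$ shows that $\Lam(R)$ sits with index $\Sig$ both in $\Lam$ and in $R\Lam$. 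Second, by Eq.~\eqref{eq:Scal-lat} an inclusion $nR\Lam\subseteq\Lam$ with $n\in\Z$ is equivalent to $d_R\mid n$, and similarly for $R^{-1}$.

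For claim~\eqref{enu:den-Sig1} I would show that $d_R$ and $d_{R^{-1}}$ each divide $\Sig$, so that their least common multiple does too. As $\Sig$ is the order of $R\Lam/\Lam(R)$ it annihilates this group, giving $\Sig R\Lam\subseteq\Lam(R)\subseteq\Lam$ and hence $d_R\mid\Sig$; the same reasoning applied to $\Lam/\Lam(R)$ yields $\Sig\Lam\subseteq\Lam(R)\subseteq R\Lam$, i.e.\ $\Sig R^{-1}\Lam\subseteq\Lam$, so $d_{R^{-1}}\mid\Sig$. For claim~\eqref{enu:den-Sig2} I would instead compare $\Lam(R)$ with the scaled copy $d_R R\Lam$: since $d_R\in\Z$ we have $d_R R\Lam\subseteq R\Lam$, and by definition $d_R R\Lam\subseteq\Lam$, whence $d_R R\Lam\subseteq\Lam(R)\subseteq\Lam$. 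Comparing indices in $\Lam$ via $[\Lam:d_R R\Lam]=d_R^{\,d}$ and $[\Lam:\Lam(R)]=\Sig$ forces $\Sig\mid d_R^{\,d}$, and the symmetric computation with $R^{-1}$ gives $\Sig\mid d_{R^{-1}}^{\,d}$. Since $\gcd(d_R,d_{R^{-1}})^{d}=\gcd(d_R^{\,d},d_{R^{-1}}^{\,d})$, the claim follows.

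The crux is claim~\eqref{enu:den-Sig3}, where the direct index estimates lose a factor and one must realise the full square $\Sig^{2}$ geometrically. My plan is to introduce the sum lattice $M:=\Lam+R\Lam$, which contains both $\Lam$ and $R\Lam$ and intersects them in $\Lam(R)$. The second isomorphism theorem then gives $[M:\Lam]=[R\Lam:\Lam(R)]=\Sig$ and $[M:R\Lam]=[\Lam:\Lam(R)]=\Sig$, so that $[M:\Lam(R)]=\Sig^{2}$. Setting $L=\lcm(d_R,d_{R^{-1}})$, I would then verify $LM\subseteq\Lam(R)$: from $d_R\mid L$ one gets $LR\Lam\subseteq\Lam$, and from $d_{R^{-1}}\mid L$ one gets $LR^{-1}\Lam\subseteq\Lam$, i.e.\ $L\Lam\subseteq R\Lam$; together with the trivial inclusions $L\Lam\subseteq\Lam$ and $LR\Lam\subseteq R\Lam$ this places $LM=L\Lam+LR\Lam$ inside both $\Lam$ and $R\Lam$, hence inside $\Lam(R)$. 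Finally, $LM\subseteq\Lam(R)\subseteq M$ with $[M:LM]=L^{d}$ and the multiplicativity $[M:LM]=[M:\Lam(R)]\,[\Lam(R):LM]$ yield $\Sig^{2}\mid L^{d}$. The delicate step is the index computation $[M:\Lam(R)]=\Sig^{2}$: it is precisely the two-sidedness of the coincidence index, secured by the first fact above, that upgrades the single factor $\Sig$ to its square, and once the lattice $M$ is available the remaining divisibility is immediate.
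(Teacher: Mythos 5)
Your proposal is correct and follows essentially the same route as the paper: part~\eqref{enu:den-Sig1} via $\Sig\Lam\subseteq\Lam(R)\subseteq R\Lam$ together with $\Sig(R^{-1})=\Sig(R)$, part~\eqref{enu:den-Sig2} by comparing the index of $\den(R)R\Lam\subseteq\Lam(R)$ in $\Lam$, and part~\eqref{enu:den-Sig3} by sandwiching $\Lam(R)$ between $\Lam+R\Lam$ (of index $\Sig^2$ below it) and its $\lcm$-scaled copy. You merely spell out a few steps the paper leaves implicit, such as $\gcd(a,b)^d=\gcd(a^d,b^d)$ and the second isomorphism theorem identification $[\Lam+R\Lam:\Lam]=\Sig$.
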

\begin{proof}
  For~\eqref{enu:den-Sig1} recall
  that $\Lam(R)$ has index $\Sig(R)$ in $\Lam$, thus
  $\Sig(R) \Lam \subseteq \Lam(R) \subseteq R \Lam$, or equivalently,
  $\Sig(R) R^{-1} \Lam \subseteq \Lam$. An argument as above shows
  that $\Sig(R)$ is a multiple of $\den(R^{-1})$. By symmetry, $\den(R)$
  is a divisor of $\Sig(R^{-1})=\Sig(R)$ as well, and
  hence~\eqref{enu:den-Sig1} follows.

  For~\eqref{enu:den-Sig2} we exploit that $\den(R)$ is an integer for
  $R\in\OC(\Lam)$. Thus $\den(R) R \Lam$ is a sublattice
  of both $\Lam$ and $R \Lam$, and hence $\den(R) R \Lam \subseteq \Lam(R)$.
  Comparing the indices of $\den(R) R \Lam$ and $\Lam(R)$
  in $\Lam$ shows that $\Sig(R)$ divides $\den(R)^d$. Using
  $\Sig(R^{-1})=\Sig(R)$ as above finally yields~\eqref{enu:den-Sig2}.
  
  Finally, let $a:=\lcm\left(\den(R),\den(R^{-1})\right)$. Then $a \Lam$ and
  $a R \Lam$ are both sublattices of $\Lam$ and $R \Lam$, hence
  $a (\Lam + R \Lam)$ is a sublattice of $\Lam \cap R \Lam$ with index
  \[
    [R \cap R \Lam: a (\Lam + R \Lam)]= \frac{a^d}{\Sig(R)^2},
  \]
  as
  $\Sig(R)=[\Lam: \Lam(R)]=[\Lam + R \Lam:\Lam]$. Hence $\Sig(R)^2$
  divides $a$.
\end{proof}
The situation becomes particularly simple for planar lattices, where
we get the following result by recalling $\den^{}_\Lam(R)=\den^{}_\Lam(R^{-1})$.
\begin{cor}\label{theo:den-Sig2D}
  Let $\Lam$ be a lattice in $\R^2$. Then, for any $R\in\OC(\Lam)$,
  \begin{align}\label{eq:sig-den-lat2dim}
    \Sig^{}_\Lam(R)=\den^{}_\Lam(R).
  \end{align}
\end{cor}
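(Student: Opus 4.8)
The plan is to specialise all three divisibility relations of Theorem~\ref{theo:den-Sig1} to $d=2$ and to feed in the planar symmetry $\den^{}_\Lam(R)=\den^{}_\Lam(R^{-1})$ supplied by the preceding corollary. Writing $n:=\den^{}_\Lam(R)=\den^{}_\Lam(R^{-1})$, the point is that both the $\lcm$ and the $\gcd$ occurring in that theorem degenerate, since $\lcm(n,n)=\gcd(n,n)=n$. Thus the three statements reduce to purely numerical assertions about $n$ and $\Sig^{}_\Lam(R)$, and the task becomes to squeeze $\Sig^{}_\Lam(R)$ between two copies of $n$.

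First I would invoke part~\eqref{enu:den-Sig1}, which under the identification above reads simply: $n$ divides $\Sig^{}_\Lam(R)$. This yields one of the two divisibilities I need. For the reverse direction I would appeal to part~\eqref{enu:den-Sig3} with $d=2$, which states that $\Sig^{}_\Lam(R)^2$ divides $\lcm(n,n)^2=n^2$. The only non-formal step is then to pass from $\Sig^{}_\Lam(R)^2 \mid n^2$ to $\Sig^{}_\Lam(R)\mid n$; this is legitimate for positive integers because, comparing the exponent of each prime on both sides, the inequality for the squares forces the inequality for the numbers themselves. (I note in passing that part~\eqref{enu:den-Sig2} only gives $\Sig^{}_\Lam(R)\mid n^2$, which is too weak to close the argument, so it is the sharper relation~\eqref{enu:den-Sig3} that is essential here.)

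Finally, combining $n\mid \Sig^{}_\Lam(R)$ with $\Sig^{}_\Lam(R)\mid n$ and using that both quantities are positive integers forces $\Sig^{}_\Lam(R)=n=\den^{}_\Lam(R)$, which is exactly Eq.~\eqref{eq:sig-den-lat2dim}. I do not anticipate any genuine obstacle in this argument: the entire content sits in the planar coincidence $\den^{}_\Lam(R)=\den^{}_\Lam(R^{-1})$, which is already available, after which the result is a two-line squeeze whose only arithmetic subtlety is the standard fact that divisibility of squares implies divisibility of the roots in $\N$.
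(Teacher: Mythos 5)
Your argument is correct and matches the paper's intended derivation: the corollary is stated there as an immediate consequence of Theorem~\ref{theo:den-Sig1} once $\den^{}_\Lam(R)=\den^{}_\Lam(R^{-1})$ is known for planar lattices, exactly as you do by combining parts~\eqref{enu:den-Sig1} and~\eqref{enu:den-Sig3}. Your observation that part~\eqref{enu:den-Sig2} alone would be too weak, so that the sharper relation~\eqref{enu:den-Sig3} is essential, is a correct and worthwhile remark.
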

This results turns out to be very useful in the analysis of CSLs
of planar lattices, as the denominator is usually much simpler to determine
than the coincidence index.

In more than two dimensions Eq.~\eqref{eq:sig-den-lat2dim} is in general not
true anymore~\cite{pzcsl4,baa97,pzcsl2}, although it may be satisfied
in special cases, e.g. Eq.~\eqref{eq:sig-den-lat2dim} holds for all
coincidence isometries of the cubic lattices~\cite{gribo74,baa97,pzcsl1}.

One word of caution should be added. Although there are a lot of connections
between CSLs and similar sublattices, a CSL is in general not a similar
sublattice, see~\cite{pzcsl1} for the cubic case. An exception are the square
and hexagonal lattices~\cite{plea96,baa97}, where every CSL is a similar
sublattice. In these two cases any coincidence rotation $R$ is the square
of a suitable $S\in\OS(\Lam)$ and we have $\Lam(R)=\den(S) S \Lam$. 
But note that we cannot have $\Lam(R)=\alpha R \Lam$ except for symmetry
operations $R$, since index considerations immediately imply
$\alpha=\den(R)=\den(R^{-1})$, which would give
$\Sigma(R)=\lcm\left(\den(R),\den(R^{-1})\right)^d$, which contradicts
part~\eqref{enu:den-Sig3} of Theorem~\ref{theo:den-Sig1}.

\section{The module case}

For the description of quasicrystals we need to go beyond lattices. The
right tool here are special kinds of $\Z$-modules, namely free $\Z$-modules
in $\R^d$, which we assume to span $\R^d$. More precisely, let
$t_1,\ldots,t_k\in\R^d$ be rationally independent vectors that
span $\R^d$, i.e., $\langle t_1,\ldots,t_k\rangle_\R=\R^d$. Then
\[
  M:=\langle t_1,\ldots,t_k\rangle_\Z
  =\{n_1 t_1+\ldots+n_kt_k \mid n_k\in\Z\}\subseteq \R^d
\]
is called a (free) \emph{$\Z$-module} of rank $k$ in dimension $d$.

Throughout this paper we shall call these $\Z$-modules simply modules.
Clearly, $M$ is a lattice if and only if $k=d$. Only in this case $M$
forms a discrete subset of $\R^d$.

$M$ is a free Abelian group of rank $k$, i.e., it is isomorphic to $\Z^k$.
In fact, $M$ can always be obtained as a projection of a $k$-dimensional
lattice into $\R^d$. From an algebraic point of view, lattices and free
$\Z$-modules of finite rank are the same, and we thus expect that we can
generalise the concepts and results for similar sublattices and CSLs
easily to the case of modules. However, some care is needed as this problem
is not a purely algebraic problem but also involves geometry. In fact,
a key ingredient are orthogonal and similarity transformations in $\R^d$,
which induce linear transformations in $\R^k$, but the latter need not
be orthogonal or similarity transformations, respectively. In addition,
the fact that $M$ is not discrete (except for lattices) may cause
some problems. So we have to carefully check
which concepts and results we can generalise.

We start with some definitions that are straightforward generalisations
of the lattice case. We again restrict to submodules that have full rank,
i.e. we call a module $M_1\subseteq M$ a \emph{submodule} of
\mbox{$M\subseteq \R^d$},
if and only if it has full rank, or equivalently,
if the index $[M:M_1]$ is finite.
  
\begin{defn}
  Two modules $M_1, M_2 \subseteq \R^d$ are called \emph{commensurate}, 
  denoted by $M_1\sim M_2$, if
  $M_1\cap M_2$ is a submodule of both $M_1$ and $M_2$.
\end{defn}
In other words, two modules $M_1, M_2 \subseteq \R^d$ are called
commensurate, if
there exists an $m\in\N$ such that $mM_1\subseteq M_2$ and $mM_2\subseteq M_1$.

Two modules $M_1, M_2 \subseteq \R^d$ are called \emph{similar}, if there
exists a \emph{similarity transformation} between them, i.e., there exist
$\alpha\in \R$ and $R\in \OG(d,\R)$ such that $M_1 = \alpha R M_2$.
\begin{defn}
  $M_1$ is called a \emph{similar submodule (SSM)} of $M$, if  there exist
  $\alpha\in \R$ and $R\in \OG(d, \R)$ such that $M_1 = \alpha R M$.
\end{defn}

As in the lattice case we define the set of \emph{similarity isometries} by
\[
  \OS(M):=\{R\in \OG(d,\R)\mid \exists \alpha\in\R^+ \mbox{ with } 
    \alpha R M \subseteq M \}.
\]
As is to be expected from the lattice case we have
\begin{thm}
  $\OS(M)\subseteq \OG(d,\R)$ is a group.
\end{thm}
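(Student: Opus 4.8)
The plan is to verify that $\OS(M)$ is a subgroup of the ambient group $\OG(d,\R)$. Since every element of $\OS(M)$ lies in $\OG(d,\R)$ by definition, it suffices to check three things: that the identity $E$ belongs to $\OS(M)$, that $\OS(M)$ is closed under composition, and that it is closed under taking inverses. The first is immediate, as the choice $\alpha=1$ gives $EM=M\subseteq M$, so $E\in\OS(M)$.

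For closure under composition, suppose $R,S\in\OS(M)$, witnessed by $\alpha,\beta\in\R^+$ with $\alpha R M\subseteq M$ and $\beta S M\subseteq M$. Using that $R$ is linear, I would compute
\begin{align*}
  \alpha\beta\, RS\, M
  = \alpha R\bigl(\beta S M\bigr)
  \subseteq \alpha R M
  \subseteq M,
\end{align*}
so that $\alpha\beta\in\R^+$ witnesses $RS\in\OS(M)$.

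The only real content lies in closure under inverses, and this is where I expect the main obstacle, precisely because $M$ need not be discrete. Let $R\in\OS(M)$ with $\alpha R M\subseteq M$ for some $\alpha\in\R^+$. Since $R$ is a bijective linear map of $\R^d$ and $\alpha\neq 0$, the module $\alpha R M$ is again free of rank $k$, hence a full-rank submodule of $M$, so the index $n:=[M:\alpha R M]$ is finite. Applying Lagrange's theorem to the finite Abelian quotient $M/\alpha R M$, every element is annihilated by $n$, whence $nM\subseteq \alpha R M$. Applying $R^{-1}$ to this inclusion yields $nR^{-1}M\subseteq\alpha M$, i.e.\ $\tfrac{n}{\alpha}R^{-1}M\subseteq M$ with $\tfrac{n}{\alpha}\in\R^+$, so $R^{-1}\in\OS(M)$.

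The point requiring care is the finiteness of the index $n$: for lattices this is geometrically transparent, but for a general module $M$ it must be argued algebraically, namely that a rank-$k$ subgroup of the rank-$k$ free Abelian group $M\cong\Z^k$ always has finite index. I would emphasise that the geometry, i.e.\ the orthogonality of $R$, enters only to guarantee that $R$ is invertible and rank-preserving, so that $\alpha R M$ genuinely has full rank; once this is secured, the subgroup argument is purely algebraic and the non-discreteness of $M$ causes no difficulty.
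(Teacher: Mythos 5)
Your proof is correct, and since the paper states this theorem without proof, there is nothing to compare against; your argument is the standard one that the paper implicitly relies on. In particular, the key step — that $\alpha R M$ is a rank-$k$ subgroup of the rank-$k$ free Abelian group $M$ and hence has finite index $n$, so that $nM\subseteq \alpha R M$ and therefore $\tfrac{n}{\alpha}R^{-1}M\subseteq M$ — is exactly the algebraic substitute for the volume argument that the paper's own definition of submodule (``full rank, or equivalently, finite index'') already presupposes.
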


The next quantity to look at are the scaling factors. In particular,
it is the sets of scaling factors that are crucial for the understanding
of SSMs and their relation to CSMs. Hence we define
\begin{align}
   \Scal^{}_M(R)&:=\{\alpha\in\R \mid \alpha R M\subseteq M \}, \nonumber\\
   \scal^{}_M(R)&:=\{\alpha\in\R \mid \alpha R M\sim M \}. \nonumber
\end{align}
We have already encountered them in the
lattice case, but there their importance may not have been so clear as they
were just multiples of the sets $\Z$ and $\Q$.
Again, $\Scal^{}_M(R)$ and $\scal^{}_M(R)$ are non-trivial if and only if
$R\in \OS(M)$, i.e. $\Scal^{}_M(R)\ne \{0\}$, and $\scal^{}_M(R)\ne \{0\}$,
if and only if $R\in \OS(M)$.

Naturally, there are some restrictions on the possible values of $\alpha$.
We have seen that $\alpha^d\in\Z$ for any $\alpha \in \Scal^{}_\Lam(R)$
in the case of lattices. More generally, one can show
$\alpha^d\in\CS$, if $M$ is an $\CS$-module~\cite{sglied2}.
  
In general, the situation is more complex, and the crucial quantity
is the rank $k$.
\begin{thm}\label{thm:alphaScalM}
  Any $\alpha\in \Scal^{}_M(R)$ is an algebraic integer. 
  If $M$ has rank $k$, then $\alpha$ has degree at most $k(k-1)$.
\end{thm}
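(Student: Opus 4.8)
The plan is to linearise the inclusion $\alpha R M\subseteq M$ over the $\Z$-basis $t_1,\dots,t_k$ of $M$ and to recover $\alpha$ from the eigenvalues of the resulting integer matrix. Assuming $\alpha\ne 0$ (the statement being trivial otherwise), I first note that $\alpha R\,t_i\in M$ for each $i$, so there is a matrix $A=(A_{ji})\in\mathrm{Mat}_k(\Z)$ with $\alpha R\,t_i=\sum_j A_{ji}\,t_j$. Writing $\iota\colon\R^k\to\R^d$ for the surjection $e_i\mapsto t_i$ (surjective because the $t_i$ span $\R^d$), this says precisely that $\alpha R\circ\iota=\iota\circ A$; that is, $\iota$ intertwines the integer matrix $A$ on $\R^k$ with the real similarity $\alpha R$ on $\R^d$.

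The key step is to transfer eigenvalue information from $A$ to $\alpha R$. Complexifying, I would observe that $\ker\iota$ is $A$-invariant, since $\iota(x)=0$ forces $\iota(Ax)=\alpha R\,\iota(x)=0$. Hence $A$ descends to an operator $\bar A$ on $\C^k/\ker\iota$, and $\iota$ induces an isomorphism $\C^k/\ker\iota\xrightarrow{\ \sim\ }\C^d$ conjugating $\bar A$ to $\alpha R$. Consequently the characteristic polynomial of $\alpha R$ divides $\chi_A:=\det(tI-A)$, so every eigenvalue of $\alpha R$ is a root of the monic integer polynomial $\chi_A$ and is therefore an algebraic integer. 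Since $R\in\OG(d,\R)$, each eigenvalue of $\alpha R$ has the form $\alpha\lambda$ with $|\lambda|=1$; picking one such $\mu=\alpha\lambda$ and using that $\alpha R$ is real, its complex conjugate $\bar\mu=\alpha\bar\lambda$ is again an eigenvalue, whence $\alpha^2=\mu\bar\mu$ is a product of two algebraic integers and thus itself an algebraic integer. As $\alpha$ solves $x^2-\alpha^2=0$, transitivity of integrality shows $\alpha$ is an algebraic integer, proving the first claim.

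For the degree bound I would argue through the Galois action on the roots of $\chi_A$. Let $S\subseteq\C$ be the set of distinct roots of $\chi_A$, so $|S|\le k$, and note $\mu,\bar\mu\in S$ with $\mu\ne\bar\mu$ (the case $\lambda=\pm1$ is immediate, as then $\pm\alpha\in S$ already has degree at most $k\le k(k-1)$). Any $\sigma$ in the absolute Galois group permutes $S$, so $\sigma(\alpha^2)=\sigma(\mu)\,\sigma(\bar\mu)$ is a product of two distinct elements of $S$; there are at most $\binom{k}{2}$ such unordered products, hence $\alpha^2$ has at most $\binom{k}{2}$ conjugates and $[\Q(\alpha^2):\Q]\le\tfrac12 k(k-1)$. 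Finally $[\Q(\alpha):\Q(\alpha^2)]\le 2$, so $\alpha$ has degree at most $k(k-1)$ over $\Q$.

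I expect the main obstacle to be the middle step. Because $\iota$ fails to be injective when $k>d$, one cannot simply diagonalise, and the real content is showing that the eigenvalues of the geometric map $\alpha R$ genuinely occur among those of the algebraic matrix $A$ — which is exactly what the invariance of $\ker\iota$ and the passage to the quotient $\bar A$ achieve. The remaining subtlety is the combinatorial bookkeeping that turns ``an eigenvalue of an integer $k\times k$ matrix times its conjugate'' into the sharp exponent $k(k-1)$, rather than a cruder bound such as $2k^2$ obtained by naively multiplying degrees.
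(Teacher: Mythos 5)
Your argument is correct and complete. The paper states Theorem~\ref{thm:alphaScalM} without proof (the details are deferred to~\cite{pzinprep}), so there is no in-text argument to compare against; but every step of yours checks out: the intertwining relation $\alpha R\circ\iota=\iota\circ A$ with $A$ an integer matrix, the $A$-invariance of $\ker\iota$ giving that the characteristic polynomial of $\alpha R$ divides $\det(tI-A)$, the identity $\alpha^2=\mu\bar\mu$ for a conjugate pair of eigenvalues of the real map $\alpha R$, and the count of at most $\binom{k}{2}$ unordered products of distinct roots bounding $[\Q(\alpha^2):\Q]$, whence $[\Q(\alpha):\Q]\le 2\cdot\tfrac12 k(k-1)$. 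The only caveat is the degenerate case $k=1$, where $k(k-1)=0$ while a nonzero $\alpha\in\Z$ has degree $1$; that is an artefact of the theorem's formulation rather than of your proof. It is also reassuring that your mechanism reproduces Example~\ref{ex:eta} exactly: there $\alpha=|\eta|$ satisfies $\alpha^2=\eta\bar\eta$, a product of two of the three roots of $x^3+3x-1$, giving degree $3\cdot2=6$ and showing your bound is attained.
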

For lattices and $\CS$-modules the degree of $\alpha$ is bounded by~$k$,
whereas in general the upper bound $k(k-1)$ cannot be improved,
as is shown by the following example.
\begin{example}\label{ex:eta}
  Let $\eta= e^{\frac{\ii \pi}{3}} \sqrt[3]{\tau} 
  - e^{-\frac{\ii \pi}{3}} \frac{1}{\sqrt[3]{\tau}}$, where
  $\tau=\frac{1+\sqrt{5}}{2}$ is the golden mean. Then
  $M=\Z[\eta]$ has rank $3$, as $\eta$ satisfies
  $\eta^3 + 3 \eta - 1 = 0$. 
  Here, $\eta=|\eta|\frac{\eta}{|\eta|}$ is a symmetry
  operation, whose scaling factor has degree $6=3\cdot 2$.
\end{example}

Let us go a step further and ask which properties the sets
$\Scal(R)$ and $\scal(R)$ have.
To begin with, we consider $\Scal(E)$, which gives the ``trivial'' CSMs.
As $\Scal(E)=\Z$ for lattices, and  \mbox{$\Scal(E)=\CS$} for $\CS$-modules,
we expect $\Scal(E)$ to be a ring of algebraic integers.
\begin{thm}
  Let $M \subseteq \R^d$ be a free $\Z$-module of rank $k$.
  $\Scal^{}_M(E)$ is a ring of some algebraic integers. In particular, \/
  $\Scal^{}_M(E)$
  is a ring with unity and it is a finitely generated free $\Z$-module,
  whose rank is a divisor of $k$ and is at most $\frac{k}{d}$.
  Moreover, $\scal(E)$ is the corresponding field of quotients.
\end{thm}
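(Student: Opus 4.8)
The plan is to write $S:=\Scal^{}_M(E)=\{\alpha\in\R\mid \alpha M\subseteq M\}$ and to treat the four assertions (ring of algebraic integers, finitely generated free $\Z$-module, rank dividing $k$ and bounded by $k/d$, identification of $\scal(E)$) in turn. First I would verify the ring structure by hand: if $\alpha M\subseteq M$ and $\beta M\subseteq M$, then $(\alpha+\beta)M\subseteq M+M=M$ and $(\alpha\beta)M\subseteq\alpha M\subseteq M$, while $0,1\in S$ is immediate; since $S\subseteq\R$ this is a commutative integral domain with unity. That every $\alpha\in S$ is an algebraic integer is exactly Theorem~\ref{thm:alphaScalM} applied to $R=E$. (Alternatively, multiplication by $\alpha$ is a $\Z$-linear endomorphism of $M\cong\Z^k$, represented in a basis $t_1,\dots,t_k$ by an integer matrix $B$; since the $t_i$ span $\R^d$, the relation $\alpha t_i=\sum_j B_{ji}t_j$ shows that $\alpha$ is an eigenvalue of $B$, hence a root of the monic integer polynomial $\det(xI-B)$.) Thus $S$ is a ring consisting of algebraic integers.

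Next I would prove finite generation. Fix a nonzero $m_0\in M$ and consider $S\to M$, $\alpha\mapsto\alpha m_0$. This map is $\Z$-linear, injective because $m_0\neq 0$ in $\R^d$, and lands in $M$ since $\alpha m_0\in\alpha M\subseteq M$. Hence, as an abelian group, $S$ is isomorphic to a subgroup of $M\cong\Z^k$, so by the structure theorem for subgroups of free abelian groups of finite rank, $S$ is a finitely generated free $\Z$-module of some rank $n\le k$.

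For the two numerical constraints I would pass to the field of quotients $K:=\mathrm{Frac}(S)$, realised as the subfield of $\R$ generated by $S$. Since $S$ is an order (a domain that is free of rank $n$ over $\Z$), $K=S\otimes_\Z\Q$ is a number field with $[K:\Q]=\operatorname{rank}_\Z S=n$. Now $V:=\Q M\subseteq\R^d$ is stable under multiplication by $K$: for $\kappa=s/r$ with $s\in S$, $r\in\N$ one has $\kappa\,\Q M=\Q(sM)\subseteq\Q M$. Hence $V$ is a $K$-vector space with $\dim_\Q V=\operatorname{rank}_\Z M=k$, and from $\dim_\Q V=[K:\Q]\cdot\dim_K V$ I get $\dim_K V=k/n$; as this is a positive integer, $n$ divides $k$. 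For the bound $n\le k/d$, I would choose $v_1,\dots,v_d\in V$ forming an $\R$-basis of $\R^d$, which is possible because $M$, and hence $V$, spans $\R^d$ over $\R$. Since $K\subseteq\R$, any $K$-linear relation among the $v_i$ is in particular an $\R$-linear relation and therefore trivial, so the $v_i$ are $K$-linearly independent. Thus $\dim_K V\ge d$, i.e.\ $k/n\ge d$, giving $n\le k/d$.

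It remains to identify $\scal(E)$. Unwinding commensurability, $\alpha M\sim M$ (for $\alpha\neq0$) means there is $m\in\N$ with $m\alpha M\subseteq M$ and $mM\subseteq\alpha M$, that is $m\alpha\in S$ and $m/\alpha\in S$. If $\alpha\in\scal(E)$ then $\alpha=(m\alpha)/m\in K$, so $\scal(E)\subseteq K$. Conversely, for $0\neq\alpha\in K=S\otimes_\Z\Q$ I write $\alpha=s/r$ and $1/\alpha=s'/r'$ with $s,s'\in S$ and $r,r'\in\N$; then $m:=rr'$ gives $m\alpha=r's\in S$ and $m/\alpha=rs'\in S$, whence $\alpha M\sim M$ and $\alpha\in\scal(E)$. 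Therefore $\scal(E)=K=\mathrm{Frac}(S)$. The step I expect to be most delicate is the bound $n\le k/d$: this is where the dimension $d$ finally enters, and it hinges on the interaction between the purely algebraic object $K$ and the geometry of the embedding $M\subseteq\R^d$ — namely that the scalars lie in $\R$, so real-linear independence upgrades to $K$-linear independence. Making sure that $V$ is genuinely a $K$-vector space (equivalently, that $S\otimes_\Z\Q=\mathrm{Frac}(S)$ because $S$ is an order) is the companion point requiring care.
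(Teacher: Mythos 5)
The paper states this theorem without proof (the module-case arguments are deferred to the reference ``in preparation''), so there is no in-paper proof to compare against; judged on its own, your argument is correct and complete. The ring axioms, the integrality of each $\alpha$ via the integer matrix $B$ (here one should add the half-line that $T(\alpha I-B)=0$ with $T\neq 0$, $T$ the $d\times k$ matrix of the $t_i$, forces $\det(\alpha I-B)=0$), and the embedding $\alpha\mapsto\alpha m_0$ into $M\cong\Z^k$ giving freeness and finite rank are all sound. The two numerical claims are handled by the right mechanism: $K=\mathrm{Frac}(\Scal_M(E))=\Scal_M(E)\otimes_\Z\Q$ is a number field of degree $n=\operatorname{rank}_\Z\Scal_M(E)$ acting on $V=\Q M$, so $k=\dim_\Q V=n\cdot\dim_K V$ gives $n\mid k$, and the key geometric point — that $K\subseteq\R$, so $\R$-linear independence of $d$ spanning vectors implies $K$-linear independence, whence $\dim_K V\ge d$ — gives $n\le k/d$. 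The identification $\scal_M(E)=K$ by clearing denominators in both $\alpha$ and $1/\alpha$ is also correct (up to the paper's own looseness about whether $0$ is counted as an element of $\scal_M(E)$). No gaps.
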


Hence the modules $M$ are not only free $\Z$-modules, but also
$\Scal(E)$-modules, but in general \emph{not} free ones, as is shown
by the following example.
\begin{example}
  Let $\xi_8=e^{\ii \pi/4}$. Then $M=\langle 1,\ii,2\xi_8,-2\bar\xi_8 \rangle_\Z$
  is a submodule of $\Z[\xi_8]$ of index $4$. In particular, \/
  $\Scal^{}_M(E)=\Z[2\sqrt{2}]$, which is not a principle ideal domain (PID).
  $M$ is not a free
  $\Scal^{}_M(E)$-module and thus not an $\CS$-module in the sense
  of~\cite{sglied2}.
\end{example}

We now look at $\Scal(R)$ and $\scal(R)$
for general $R$. We start with $\scal(R)$, as the results are much nicer for
$\scal(R)$, which is due to
the fact that $\scal(E)$ is a field.
\begin{thm}\label{theo:scalR}
  Let $R,S\in \OS(M)$ and let $\alpha$ be an arbitrary element
  of $\scal^{}_M(R)$. Then, \pagebreak[3]
  \begin{align}
    \scal^{}_M(R)&=\alpha \scal^{}_M(E),  \\
    \scal^{}_M(RS)&=\scal^{}_M(R)\scal^{}_M(S). 
  \end{align}
\end{thm}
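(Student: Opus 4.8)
The plan is to record two elementary facts about commensurability and then prove the two identities in turn, the second following from the first. I shall use that $\OS(M)$ is a group and that $\scal_M(E)$ is a field (both established above), and I take $\alpha\neq 0$, which is legitimate since $R\in\OS(M)$ forces $\scal_M(R)\neq\{0\}$. The two facts are: (i) on full-rank submodules of $\R^d$ the relation $\sim$ is an equivalence relation — only transitivity needs a word, and it follows by multiplying the integers witnessing $M_1\sim M_2$ and $M_2\sim M_3$; and (ii) $\sim$ is preserved by every invertible linear map $L$, since $mM_1\subseteq M_2,\ mM_2\subseteq M_1$ give $mL(M_1)\subseteq L(M_2),\ mL(M_2)\subseteq L(M_1)$. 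In particular $\sim$ is invariant under scaling by a nonzero real and under any similarity $\alpha R$.

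For the identity $\scal_M(R)=\alpha\scal_M(E)$ I would prove both inclusions using only (i) and (ii). To see $\alpha\scal_M(E)\subseteq\scal_M(R)$, let $\beta\in\scal_M(E)$, so $\beta M\sim M$; scaling $\alpha RM\sim M$ by $\beta$ gives $\alpha\beta RM=\beta(\alpha RM)\sim\beta M\sim M$, hence $\alpha\beta\in\scal_M(R)$. Conversely, let $\gamma\in\scal_M(R)$, so $\gamma RM\sim M$; scaling $\alpha RM\sim M$ by $\gamma/\alpha$ gives $\gamma RM=(\gamma/\alpha)(\alpha RM)\sim(\gamma/\alpha)M$, and combining with $\gamma RM\sim M$ yields $(\gamma/\alpha)M\sim M$, that is, $\gamma\in\alpha\scal_M(E)$.

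The product formula then follows. The inclusion $\scal_M(R)\scal_M(S)\subseteq\scal_M(RS)$ comes directly from (ii): for $\alpha\in\scal_M(R)$ and $\beta\in\scal_M(S)$, applying the similarity $\alpha R$ to $\beta SM\sim M$ gives $\alpha\beta RSM=\alpha R(\beta SM)\sim\alpha RM\sim M$, so $\alpha\beta\in\scal_M(RS)$. For the reverse inclusion, fix nonzero $\alpha_0\in\scal_M(R)$ and $\beta_0\in\scal_M(S)$; then $\alpha_0\beta_0\in\scal_M(RS)$ by what was just shown, and, since $RS\in\OS(M)$, the first identity gives $\scal_M(RS)=\alpha_0\beta_0\,\scal_M(E)$. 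Hence any $\gamma\in\scal_M(RS)$ equals $\alpha_0\beta_0\delta$ for some $\delta\in\scal_M(E)$, and $\gamma=(\alpha_0\delta)\beta_0\in\scal_M(R)\scal_M(S)$ because $\alpha_0\delta\in\alpha_0\scal_M(E)=\scal_M(R)$.

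The main obstacle is the reverse inclusion just treated: one must factor an arbitrary element of $\scal_M(RS)$ as a product, and this rests on the coset description from the first identity, which encodes the divisibility available in the field $\scal_M(E)$. This is exactly why $\scal$ behaves better than $\Scal$: products of cosets of the field $\scal_M(E)$ are again single cosets, whereas $\Scal_M(E)$ is merely a ring and the analogous factorisation can fail.
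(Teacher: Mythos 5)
The paper states Theorem~\ref{theo:scalR} without a printed proof (the details are deferred to work in preparation), so there is no argument of the author's to compare yours against; judged on its own, your proof is correct and complete. The two preliminary facts are exactly the right tools: transitivity of $\sim$ via multiplying the integer witnesses $m_1,m_2$, and invariance of $\sim$ under any invertible linear map, both of which follow from the paper's reformulation of commensurability as $mM_1\subseteq M_2$ and $mM_2\subseteq M_1$. Both inclusions of $\scal^{}_M(R)=\alpha\scal^{}_M(E)$ then go through as you write them, and reducing $\scal^{}_M(RS)\subseteq\scal^{}_M(R)\scal^{}_M(S)$ to the coset identity applied to $RS$ (with the nonzero product $\alpha_0\beta_0$ as base point) is the natural way to obtain the nontrivial factorisation. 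Two small remarks. First, the role of $0$: under the paper's convention $0\in\scal^{}_M(R)$ (e.g.\ $\scal^{}_\Lam(E)=\Q$), the first identity can only be meant for $\alpha\neq 0$, and you say so explicitly; the zero element then sits trivially in every set involved, so nothing breaks. Second, your argument never actually invokes the field structure of $\scal^{}_M(E)$ as an external input --- the multiplicative closure you need is exactly what the first identity (applied with $R=E$) delivers --- so that hypothesis could be dropped from your preamble; your closing comment contrasting $\scal$ with $\Scal$ is nevertheless an accurate diagnosis of why the analogous product formula fails at the level of $\Scal$.
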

In fact, this is in some sense a generalisation of
Lemma~\ref{lem:den-mult-lat} and Eq.~\eqref{eq:scal-lat}. In particular,
it follows that
\begin{align}
  \scal^{}_M(R)\scal^{}_M(R^{-1})=\scal^{}_M(E).
\end{align}
Thus, the set $\{ \scal(R) \mid R\in\OS(M) \}$ has a natural group structure,
with unit element $\scal^{}_M(E)$. It is isomorphic to a (countable) subgroup
of a factor group of the multiplicative group $(\R^+,\cdot)$.
It will turn out later that this
group plays a fundamental role in connecting $\OC(M)$ and $\OS(M)$.

For $\Scal(R)$ the situation is more complex, and the generalisation
of Eq.~\eqref{eq:Scal-lat} reads as follows.
\begin{thm}
  $\Scal^{}_M(R)$ is a finitely generated free $\Z$-module. 
  Moreover, $\beta\Scal^{}_M(R)\subseteq \Scal^{}_M(R)$ for
  any $\beta\in \Scal^{}_M(E)$ i.e., $\Scal^{}_M(R)$ is also a finitely
  generated \/ $\Scal^{}_M(E)$-module.

  If \/ $\Scal^{}_M(E)$ is a PID, \/ $\Scal^{}_M(R)$ is a free \/
  $\Scal^{}_M(E)$-module of
  rank $1$, i.e.\ there exists an $\alpha\in \Scal^{}_M(R)$ such that \/
  $\Scal^{}_M(R)= \alpha \Scal^{}_M(E)$.
\end{thm}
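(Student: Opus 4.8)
The plan is to treat the three assertions in turn, reducing everything to the structure of $\scal^{}_M(R)$ established in Theorem~\ref{theo:scalR} together with the fact (Theorem~\ref{thm:alphaScalM}) that every element of $\Scal^{}_M(R)$ is an algebraic integer. If $R\notin\OS(M)$ then $\Scal^{}_M(R)=\{0\}$ and every claim is trivial, so assume $R\in\OS(M)$ and fix a nonzero $\alpha_0\in\Scal^{}_M(R)$.

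First I would record the two closure properties. If $\alpha,\beta\in\Scal^{}_M(R)$, then for every $m\in M$ we have $(\alpha-\beta)Rm=\alpha Rm-\beta Rm\in M$, so $\Scal^{}_M(R)$ is an additive subgroup of $\R$; and if $\beta\in\Scal^{}_M(E)$ and $\alpha\in\Scal^{}_M(R)$, then $(\beta\alpha)RM=\beta(\alpha RM)\subseteq\beta M\subseteq M$, giving $\beta\alpha\in\Scal^{}_M(R)$. This already establishes the $\Scal^{}_M(E)$-module assertion, once finite generation is in hand.

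For finite generation, which is the main point, I would use Theorem~\ref{theo:scalR} to confine all of $\Scal^{}_M(R)$ to a single number field. Write $K:=\scal^{}_M(E)$, which is a number field of degree $n=\operatorname{rank}\Scal^{}_M(E)$. Theorem~\ref{theo:scalR} gives $\scal^{}_M(R)=\alpha_0 K$, so every $\alpha\in\Scal^{}_M(R)\subseteq\scal^{}_M(R)$ lies in the fixed field $L:=K(\alpha_0)$, which has finite degree over $\Q$. Since each such $\alpha$ is an algebraic integer by Theorem~\ref{thm:alphaScalM}, we obtain $\Scal^{}_M(R)\subseteq\mathcal{O}_L$, the ring of integers of $L$. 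As $\mathcal{O}_L$ is a free $\Z$-module of finite rank, its subgroup $\Scal^{}_M(R)$ is a finitely generated free $\Z$-module; intersecting additionally with the $\Q$-subspace $\alpha_0 K$ bounds the rank by $n$. Because $\Z\subseteq\Scal^{}_M(E)$, finite generation over $\Z$ forces finite generation over $\Scal^{}_M(E)$.

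Finally, suppose $\Scal^{}_M(E)$ is a PID. Then $\Scal^{}_M(R)$ is a finitely generated torsion-free module over a PID, hence free, and it remains only to compute its rank. Here I would pass to the fraction field: the inclusions $\alpha_0\Scal^{}_M(E)\subseteq\Scal^{}_M(R)\subseteq\alpha_0 K$ show that the $K$-span of $\Scal^{}_M(R)$ equals $\alpha_0 K$, which is one-dimensional over $K$, so the free rank is exactly $1$ and $\Scal^{}_M(R)=\alpha\Scal^{}_M(E)$ for any generator $\alpha$. The only delicate step is the finite-generation argument, and its crux is precisely that Theorem~\ref{theo:scalR} pins every element of $\Scal^{}_M(R)$ down to the single finite extension $L=K(\alpha_0)$, after which ``algebraic integer'' upgrades to ``element of the lattice $\mathcal{O}_L$''.
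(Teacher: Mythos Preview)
The paper does not actually give a proof of this theorem: like most of the results in Section~3, it is stated without argument, the details apparently deferred to the reference marked ``in preparation''. So there is no in-paper proof to compare against.

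Your argument stands on its own and is correct. The two closure properties are immediate, and the key step---finite generation over~$\Z$---is handled cleanly: Theorem~\ref{theo:scalR} confines $\Scal^{}_M(R)$ to $\alpha_0\,\scal^{}_M(E)$, hence to the single number field $L=K(\alpha_0)$, and Theorem~\ref{thm:alphaScalM} then forces $\Scal^{}_M(R)\subseteq\mathcal{O}_L$, which is a free $\Z$-module of finite rank. Passing to finite generation over $\Scal^{}_M(E)$ via $\Z\subseteq\Scal^{}_M(E)$ is fine, and the rank-one computation in the PID case (sandwich $\alpha_0\Scal^{}_M(E)\subseteq\Scal^{}_M(R)\subseteq\alpha_0 K$ and compare $K$-dimensions) is the standard one. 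One could alternatively bypass the field $L$ entirely and argue that $\Scal^{}_M(R)$, being contained in $\alpha_0 K$ and consisting of algebraic integers, already sits inside $\alpha_0\mathcal{O}_K$; but your route via $\mathcal{O}_L$ is just as good.
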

Thus defining a denominator makes only sense if $\Scal^{}_M(E)$ is a PID.
Nevertheless one can generalise Lemma~\ref{lem:den-mult-lat}
and~\ref{lem:den-lat} to a certain extent using ideals~\cite{pzinprep}.
However,
we do not want to pursue this topic here any further, as some complications
arise if $M$ is not a free $\Scal^{}_M(E)$-module.

Let us turn our attention to coincidence site modules now,
which are defined as follows.
\begin{defn}
  Let $R\in \OG(d,\R)$. If $M$ and $R M$ are commensurate, 
  $ M(R):=M \cap R M$ is called a \emph{coincidence site module (CSM)}.
  In this case, $R$ is called a \emph{coincidence isometry}.
  The corresponding index $\Sig^{}_M(R):=\mbox{$[M: M(R)]$}$ is called its
  \emph{coincidence index}.
\end{defn}

In complete analogy to the lattice case we have the following result.
\begin{thm}
  The set of all coincidence isometries
  \[
    \OC(M):=\{R\in \OG(d,\R)\mid M\sim R M \}
  \]
  forms a group, a subgroup of \/ $\OG(d,\R)$.
\end{thm}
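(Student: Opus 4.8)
The plan is to verify the subgroup criterion. Since $\OC(M)$ is by definition a subset of the group $\OG(d,\R)$, it suffices to show that it contains the identity and is closed under composition and inversion. The key tool throughout is the characterisation of commensurability recorded after the definition: $M_1 \sim M_2$ if and only if there exists an $m \in \N$ with $m M_1 \subseteq M_2$ and $m M_2 \subseteq M_1$. I will also use repeatedly that every $R \in \OG(d,\R)$ is a linear bijection, so that $R(m X) = m\, R X$ for any subset $X \subseteq \R^d$ and any scalar $m$, and that $R X_1 \subseteq R X_2$ holds precisely when $X_1 \subseteq X_2$.

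First, $E \in \OC(M)$ is immediate, since $M \cap E M = M$ has full rank in $M$. For closure under inversion, suppose $R \in \OC(M)$, so that $m M \subseteq R M$ and $m\, R M \subseteq M$ for some $m \in \N$. Applying the linear bijection $R^{-1}$ to both inclusions and using $R^{-1}(m X) = m\, R^{-1} X$ yields $m\, R^{-1} M \subseteq M$ and $m M \subseteq R^{-1} M$, which is exactly the statement $M \sim R^{-1} M$; hence $R^{-1} \in \OC(M)$.

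For closure under composition, the natural route is through the transitivity of $\sim$. I would first observe that $\sim$ is an equivalence relation: if $m M_1 \subseteq M_2$, $m M_2 \subseteq M_1$ and $n M_2 \subseteq M_3$, $n M_3 \subseteq M_2$, then $mn\, M_1 \subseteq M_3$ and $mn\, M_3 \subseteq M_1$, so $M_1 \sim M_3$. Now let $R, S \in \OC(M)$. From $M \sim S M$ choose $m$ with $m M \subseteq S M$ and $m\, S M \subseteq M$; applying $R$ gives $m\, R M \subseteq R S M$ and $m\, R S M \subseteq R M$, that is, $R M \sim R S M$. Combining this with $M \sim R M$ via transitivity yields $M \sim R S M$, whence $R S \in \OC(M)$.

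None of these steps is technically deep; the only point requiring genuine care is to use \emph{both} inclusions in the definition of commensurability for modules. Unlike the lattice case, where a single inclusion $m M_1 \subseteq M_2$ already forces $M_1 \cap M_2$ to have full rank, for modules of rank $k > d$ the two directions are logically independent and both must be tracked through each manipulation. One must also be careful to apply $R$ only as a linear bijection commuting with scalar multiplication, rather than relying on any metric or discreteness property of $M$. With these observations the subgroup criterion is satisfied, and $\OC(M)$ is a subgroup of $\OG(d,\R)$.
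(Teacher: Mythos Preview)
Your argument is correct: verifying the subgroup criterion via the two-sided inclusion characterisation of commensurability, together with transitivity of $\sim$ and the fact that linear bijections preserve inclusions and commute with scaling, is exactly the right thing to do, and all steps go through as written.

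The paper itself does not supply a proof here; it states the result as being ``in complete analogy to the lattice case'' and moves on, so there is no alternative argument to compare yours against. Your proof is the standard one and would be what any reader is expected to reconstruct.

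One small remark on your final paragraph: your claim that for modules of rank $k>d$ the two inclusions are \emph{logically independent} is not quite accurate in the situation at hand. Since $R$ is a linear bijection, $M$ and $RM$ are free $\Z$-modules of the \emph{same} rank~$k$; hence if $mM\subseteq RM$, then $mM$ is a rank-$k$ subgroup of the rank-$k$ free abelian group $RM$, so $[RM:mM]=:N$ is finite and $N\cdot RM\subseteq mM\subseteq M$. Thus one inclusion still forces the other, exactly as for lattices, because this is a purely algebraic fact about free abelian groups of equal rank and has nothing to do with discreteness. Tracking both directions as you did is harmless, just slightly redundant.
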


As $M$ is not discrete in general, we cannot define a unit cell with a
non-zero volume. Thus, the proof of the following result becomes more
complicated, as one needs algebraic methods instead of the usual argument
of the preservation of volume~\cite{pzinprep}.
\begin{thm}\label{theo:Sigma-Rinv}
  For any $R\in \OC(M)$
  \[
    \Sig^{}_M(R)=\Sig^{}_M(R^{-1}).
  \]
\end{thm}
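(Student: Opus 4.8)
The plan is to prove $\Sig_M(R)=\Sig_M(R^{-1})$ by reducing the statement to a comparison of two indices that are manifestly equal. The key observation is that $R$ is an isometry, hence it maps $M$ bijectively onto $RM$ and preserves all index relations between commensurate modules. Concretely, I would start from the CSM $M(R)=M\cap RM$ and apply the linear map $R^{-1}$ to it. Since $R^{-1}$ is a bijection, it preserves indices, so
\begin{align*}
  \Sig_M(R)=[M:M\cap RM]=[R^{-1}M:R^{-1}M\cap M]=[R^{-1}M:M(R^{-1})].
\end{align*}
Thus the proof reduces to showing $[R^{-1}M:M\cap R^{-1}M]=[M:M\cap R^{-1}M]$, i.e.\ that the two ``parent'' modules $M$ and $R^{-1}M$ sit over their common submodule $M(R^{-1})$ with the \emph{same} index.

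The heart of the matter is therefore the following symmetry: for two commensurate modules $M_1,M_2$ one has $[M_1:M_1\cap M_2]=[M_2:M_1\cap M_2]$ whenever $M_1$ and $M_2$ are isometric images of one another under an isometry fixing the ambient $\R^d$. In the lattice case this is immediate from the preservation of covolume, but as the authors warn, $M$ is not discrete so there is no unit cell of nonzero volume. First I would introduce an algebraic surrogate for volume. The natural candidate is to use the isomorphism $M\cong\Z^k$ and track everything through the coordinate lattice: a submodule of index $n$ corresponds to a sublattice of $\Z^k$ of index $n$, and indices multiply along chains. I would then express both $\Sig_M(R)$ and $\Sig_M(R^{-1})$ as ratios of lattice indices in $\Z^k$, using that $R$ induces a (not necessarily orthogonal) automorphism of the ambient rational structure.

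The cleanest route uses the group $\{\scal_M(R)\mid R\in\OS(M)\}$ from Theorem~\ref{theo:scalR}. Since $R\in\OC(M)$ means $M\sim RM$, we have $\scal_M(R)=\scal_M(E)$, so the subtle scaling behaviour that distinguishes $\den(R)$ from $\den(R^{-1})$ collapses, and $R,R^{-1}$ become symmetric with respect to the $\scal_M(E)$-module structure. I would pick an integer $m$ with $mM\subseteq RM$ and $mRM\subseteq M$ (commensurability) and compare the finite quotients $M/mM$, using that $R$ induces a $\Z$-linear isomorphism $M/mM\to RM/mRM$ of equal cardinality $m^k$. Counting $|M(R)/mM|$ against $|M/M(R)|$ in the finite group $M/mM$, and doing the symmetric computation in $M/mRM$ via the isomorphism induced by $R$, should force $\Sig_M(R)=\Sig_M(R^{-1})$.

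The main obstacle I anticipate is precisely the non-discreteness: I cannot literally count lattice points in a fundamental domain, so the ``volume'' argument must be replaced by a purely module-theoretic index computation, and I must verify that the isomorphism $R\colon M\to RM$ restricts correctly to the common submodules so that indices transport without loss. In particular I must ensure the chain of equalities of indices in the finite abelian group $M/mM$ is legitimate even though $R$ is not orthogonal as a map on coordinates. Once the index-multiplicativity $[M:mM]=[M:M(R)]\,[M(R):mM]$ and its $R$-image are set up carefully, the conclusion follows by symmetry; the delicate bookkeeping is entirely in establishing that $R$ induces an index-preserving bijection between the relevant finite quotients.
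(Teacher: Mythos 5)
The paper does not actually prove this theorem; it explicitly defers the proof to~\cite{pzinprep}, remarking that ``one needs algebraic methods instead of the usual argument of the preservation of volume''. Your opening reduction is correct and is surely the intended first step: applying the isomorphism $R^{-1}$ gives $\Sig^{}_M(R)=[R^{-1}M:M\cap R^{-1}M]$, so the theorem is equivalent to the symmetry $[M_1:M_1\cap M_2]=[M_2:M_1\cap M_2]$ for $M_2=R^{-1}M_1$, and you have correctly identified the obstacle (no covolume for non-discrete $M$). But the argument you offer for that key symmetry is circular. Choose $m$ with $mM\subseteq RM$ and $mRM\subseteq M$; then $mM\subseteq M(R)$ and $mRM\subseteq M(R)$, and your two index chains read
\[
m^k=[M:mM]=\Sig^{}_M(R)\,[M(R):mM],\qquad
m^k=[RM:mRM]=[RM:M(R)]\,[M(R):mRM].
\]
Equating them yields $\Sig^{}_M(R)\,[M(R):mM]=\Sig^{}_M(R^{-1})\,[M(R):mRM]$, and the conclusion would require $[M(R):mM]=[M(R):mRM]$ --- which is again an instance of the very statement to be proved, since $mM$ and $mRM$ are isometric submodules of $M(R)$ whose indices over the intersection you must show agree. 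Each chain separately reduces to $m^k=m^k$ and produces no relation between $\Sig^{}_M(R)$ and $\Sig^{}_M(R^{-1})$. Likewise, $\scal^{}_M(R)=\scal^{}_M(E)$ only records that $1\in\scal^{}_M(R)$ and does not constrain the indices.

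To see what is actually missing: since $M\sim RM$, the isometry $R$ preserves the rational span of $M$ and hence acts on $\Q\otimes_\Z M\cong\Q^k$ as a matrix $A\in \mathrm{GL}(k,\Q)$ with respect to a $\Z$-basis $t_1,\dots,t_k$; the Smith normal form of $A$ gives $[M:M\cap RM]\big/[RM:M\cap RM]=|\det A|$, so the theorem is precisely the assertion $|\det A|=1$. For a lattice ($k=d$) this is immediate because $A=R$ is orthogonal. For $k>d$ the natural attempt --- $A^{T}GA=G$ with $G=\bigl(\langle t_i,t_j\rangle\bigr)$ the Gram matrix, whence $(\det A)^2=1$ --- fails because $G$ has rank $d<k$ and is singular, so no conclusion about $\det A$ follows. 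Supplying a substitute for this determinant computation is exactly the ``algebraic'' content the paper defers to~\cite{pzinprep}, and it is absent from your proposal.
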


As for lattices, the group $\OC(M)$ can be characterised by $\scal_M(R)$.
\begin{lemma}\label{lem:OC-scal}
  Let $M\subseteq \R^d$ be a finitely generated $\Z$-module and let $\OG(M)$
  be its symmetry group. Then
  \begin{enumerate}
    \item $R\in \OC(M)$ if and only if $1 \in \scal_M(R)$.
    \item $R\in \OG(M)$ if and only if $1 \in \Scal_M(R)$.
  \end{enumerate}
\end{lemma}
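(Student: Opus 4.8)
The plan is to prove both equivalences directly from the definitions, using the characterisation that commensurability is equivalent to the existence of a common scaled submodule. For part~(1), recall that $R\in\OC(M)$ means $M\sim RM$, i.e.\ $M$ and $RM$ are commensurate. By definition, $1\in\scal^{}_M(R)$ means $1\cdot RM = RM \sim M$, which is literally the same condition. So I would argue that the forward and backward directions are tautological once the definitions are unwound: $R\in\OC(M)$ $\Longleftrightarrow$ $M\sim RM$ $\Longleftrightarrow$ $RM\sim M$ $\Longleftrightarrow$ $1\in\scal^{}_M(R)$. The only subtlety is checking that the definition of $\scal^{}_M(R)$ as $\{\alpha\in\R\mid \alpha RM\sim M\}$ indeed contains $1$ precisely when $RM$ is commensurate to $M$, which follows immediately since setting $\alpha=1$ gives the commensurability condition verbatim.

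For part~(2), the symmetry group $\OG(M)$ consists of those $R\in\OG(d,\R)$ with $RM=M$. I would show $R\in\OG(M)$ $\Longleftrightarrow$ $1\in\Scal^{}_M(R)$ as follows. If $RM=M$, then trivially $1\cdot RM = M\subseteq M$, so $1\in\Scal^{}_M(R)$. Conversely, suppose $1\in\Scal^{}_M(R)$, meaning $RM\subseteq M$. The key point is to upgrade this inclusion to an equality. Here I would invoke that $R$ is an isometry, so it preserves the module structure up to the orthogonal action, and that $RM\subseteq M$ forces $RM$ to have full rank in $M$; combined with $R$ being invertible in $\OG(d,\R)$, one also has $R^{-1}\in\OS(M)$ and $1\in\Scal^{}_M(R^{-1})$, i.e.\ $R^{-1}M\subseteq M$, hence $M\subseteq RM$. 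The two inclusions give $RM=M$, so $R\in\OG(M)$.

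The main obstacle is precisely this upgrade from inclusion to equality in part~(2): from $RM\subseteq M$ alone one must deduce $RM=M$. For lattices this is immediate by a volume (index) argument, since an isometry preserves volume and a proper full-rank sublattice has index strictly greater than one. For general modules $M$ is not discrete and has no unit cell of nonzero volume, so I would instead argue algebraically. The cleanest route is to observe that $\Scal^{}_M(R)$ is a finitely generated free $\Z$-module (by the theorem on $\Scal^{}_M(R)$ stated above) and that $1\in\Scal^{}_M(R)$ together with $R\in\OS(M)$ implies $1\in\Scal^{}_M(E)$ and that the scaling factor $\alpha=1$ has module index $[M:RM]=1$. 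Concretely, if $RM\subseteq M$ with $R$ orthogonal, then applying the same reasoning to $R^{-1}$ yields $R^{-1}M\subseteq M$; this symmetry is the heart of the matter and avoids any appeal to discreteness or volume.

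Thus the overall strategy reduces both statements to unwinding definitions, with the single nontrivial ingredient being the antisymmetry argument $RM\subseteq M \wedge R^{-1}M\subseteq M \Rightarrow RM=M$ in part~(2), which I expect to follow from the group structure of $\OS(M)$ and the fact that $\Scal^{}_M(E)$ is a ring with unity containing $1$ as stated in the earlier theorems.
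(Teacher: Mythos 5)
Part~(1) of your argument is fine: it really is just the definition of $\scal^{}_M(R)$ evaluated at $\alpha=1$, and the forward direction of part~(2) is likewise immediate. The gap is in the converse of part~(2), and it sits exactly where you located it but did not close it. From $1\in\Scal^{}_M(R)$, i.e.\ $RM\subseteq M$, you claim that ``applying the same reasoning to $R^{-1}$'' yields $1\in\Scal^{}_M(R^{-1})$, i.e.\ $R^{-1}M\subseteq M$. There is no such reasoning available: the hypothesis is an assertion about $R$ alone, and the group property of $\OS(M)$ gives merely that \emph{some} $\alpha>0$ satisfies $\alpha R^{-1}M\subseteq M$, not $\alpha=1$. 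The paper explicitly warns that this kind of $R\leftrightarrow R^{-1}$ symmetry fails in general --- already for lattices $\den(R)$ and $\den(R^{-1})$ need not coincide --- so the step you call ``the heart of the matter'' is precisely the unproved content of the lemma. Your fallback remarks (that $\Scal^{}_M(R)$ is a finitely generated $\Z$-module, or that ``the scaling factor $\alpha=1$ has module index $[M:RM]=1$'') either do not bear on the question or assume the conclusion.

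A correct route using only what the paper has already established: since $R$ is invertible, $RM$ is again free of rank $k$, so $RM\subseteq M$ forces $[M:RM]<\infty$; hence $M\sim RM$, $R\in\OC(M)$, and $M(R)=M\cap RM=RM$, giving $\Sig^{}_M(R)=[M:RM]$. On the other hand $M\subseteq R^{-1}M$, so $M(R^{-1})=M\cap R^{-1}M=M$ and $\Sig^{}_M(R^{-1})=1$. Theorem~\ref{theo:Sigma-Rinv} now yields $[M:RM]=\Sig^{}_M(R)=\Sig^{}_M(R^{-1})=1$, i.e.\ $RM=M$. If you wish to avoid invoking Theorem~\ref{theo:Sigma-Rinv}, you must instead argue directly that the integer matrix $A$ induced by $R$ on $M\cong\Z^k$ has $|\det A|=1$; the key point is that the kernel of the natural projection $\R^k\to\R^d$ contains no nonzero rational vector (by the rational independence of the generators), so every irreducible factor of the characteristic polynomial of $A$ must have a root of modulus one and is therefore reciprocal with constant term $\pm1$. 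This is exactly the ``algebraic method replacing the volume argument'' alluded to before Theorem~\ref{theo:Sigma-Rinv}, and it cannot be bypassed by an appeal to symmetry.
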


By Theorem~\ref{theo:scalR}, $\{ \scal(R) : R\in \OS(M)\}$ forms a
group with unit element $\scal(E)$. This gives us the following analogue of
Theorem~\ref{thm:OCOS-lat}
\begin{thm}
  The kernel of the homomorphism 
  \begin{align}
    \phi: \OS(M) & \to \R^+/(\scal^{}_M(E)\cap\R^+),\nonumber \\
    R & \mapsto \scal^{}_M(R) \cap \R^+ \nonumber
  \end{align}
  is the group $\OC(M)$. Thus $\OC(M)$ is a normal subgroup of $\OS(M)$
  and $\OS(M)/\OC(M)$ is Abelian.
\end{thm}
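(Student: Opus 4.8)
The plan is to verify that $\phi$ is a well-defined group homomorphism and then to identify its kernel with $\OC(M)$, after which both assertions follow from elementary group theory, with $\scal^{}_M(E)$ playing the role that $\Q$ played in Theorem~\ref{thm:OCOS-lat}. First I would settle well-definedness. For $R\in\OS(M)$ there is, by definition, a positive $\alpha$ with $\alpha R M\subseteq M$, so $\scal^{}_M(R)$ contains a positive element. Fixing such an $\alpha>0$, Theorem~\ref{theo:scalR} gives $\scal^{}_M(R)=\alpha\,\scal^{}_M(E)$, and since $\alpha>0$ the map $\beta\mapsto\alpha\beta$ preserves sign, so that
\[
  \scal^{}_M(R)\cap\R^+=\alpha\,(\scal^{}_M(E)\cap\R^+).
\]
Because $\scal^{}_M(E)$ is a field, $\scal^{}_M(E)\cap\R^+$ is a subgroup of $(\R^+,\cdot)$ (closed under products and under inverses), so the target is a genuine group and $\phi(R)$ is exactly one of its cosets; thus $\phi$ is well defined.

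Next I would check that $\phi$ is a homomorphism. Choosing positive representatives $\alpha\in\scal^{}_M(R)$ and $\gamma\in\scal^{}_M(S)$, the second identity of Theorem~\ref{theo:scalR} shows that $\alpha\gamma$ is a positive element of $\scal^{}_M(RS)$, whence $\phi(RS)=\alpha\gamma\,(\scal^{}_M(E)\cap\R^+)=\phi(R)\phi(S)$.

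Then comes the kernel computation. An element $R$ lies in $\ker\phi$ exactly when $\scal^{}_M(R)\cap\R^+$ is the trivial coset $\scal^{}_M(E)\cap\R^+$. Since $1$ belongs to the latter, this forces $1\in\scal^{}_M(R)$; conversely, if $1\in\scal^{}_M(R)$ then Theorem~\ref{theo:scalR} with $\alpha=1$ gives $\scal^{}_M(R)=\scal^{}_M(E)$, so $R\in\ker\phi$. By Lemma~\ref{lem:OC-scal}, the condition $1\in\scal^{}_M(R)$ is equivalent to $R\in\OC(M)$, so $\ker\phi=\OC(M)$. Being the kernel of a homomorphism, $\OC(M)$ is automatically a normal subgroup of $\OS(M)$, and the first isomorphism theorem identifies $\OS(M)/\OC(M)$ with the image of $\phi$, a subgroup of the Abelian group $\R^+/(\scal^{}_M(E)\cap\R^+)$; hence $\OS(M)/\OC(M)$ is Abelian.

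I expect the only genuinely delicate point to be the well-definedness step: one must ensure that passing to positive scaling factors converts the coset relation $\scal^{}_M(R)=\alpha\,\scal^{}_M(E)$ of Theorem~\ref{theo:scalR} into an honest coset of the \emph{subgroup} $\scal^{}_M(E)\cap\R^+$ in $(\R^+,\cdot)$, and this is precisely where the field property of $\scal^{}_M(E)$ is indispensable. Everything else is a transcription of the lattice argument. I would also remark that, unlike the lattice statement in Theorem~\ref{thm:OCOS-lat}, no claim is made here that the elements of $\OS(M)/\OC(M)$ have finite order: the higher possible degree of the scaling factors (compare Theorem~\ref{thm:alphaScalM}) removes the mechanism, based on $\alpha^d\in\Z$, that forced orders dividing $d$ in the lattice case.
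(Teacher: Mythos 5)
Your proof is correct and follows exactly the route the paper intends: the paper states this theorem without a written-out proof, merely noting that Theorem~\ref{theo:scalR} makes $\{\scal^{}_M(R) \mid R\in\OS(M)\}$ a group with unit $\scal^{}_M(E)$, and your argument supplies precisely the missing details (well-definedness via $\scal^{}_M(R)=\alpha\,\scal^{}_M(E)$ and the field property of $\scal^{}_M(E)$, the homomorphism property via $\scal^{}_M(RS)=\scal^{}_M(R)\scal^{}_M(S)$, and the kernel identification via Lemma~\ref{lem:OC-scal}). Your closing remark about why the finite-order statement from the lattice case is dropped is also consistent with the paper's discussion of Example~\ref{ex:eta}.
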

We have seen that in the lattice case all elements of
$\OS(M)/\OC(M)$ have finite order, which is a divisor
of~$d$. This is no longer true for
general modules $M$, see Example~\ref{ex:eta}, where there are no non-trivial
coincidence isometries and any element of $\OS(M)/\OC(M)$, except the unit
element, has infinite order.


\section*{Acknowledgement}

The author thanks M.~Baake and S.~Glied for interesting discussions.
This work was supported by the German Research Council (DFG),
within the CRC 701.

\end{multicols}
\end{document}